\newtheorem{thm}{Theorem}
\newtheorem{lem}[thm]{Lemma}
\begin{document}

\nocite{*}

\title{\bf On the Number of Divisors of $n^2 -1$}

\author{\textsc{Adrian W. Dudek} \\ 
Mathematical Sciences Institute \\
The Australian National University \\ 
\texttt{adrian.dudek@anu.edu.au}}
\date{}

\maketitle

\begin{abstract}
We prove an asymptotic formula for the sum \(\sum_{n \leq N} d(n^2 - 1)\), where $d(n)$ denotes the number of divisors of $n$. During the course of our proof, we also furnish an asymptotic formula for the sum \(\sum_{d \leq N} g(d)\), where \(g(d)\) denotes the number of solutions $x$ in $\mathbb{Z}_d$ to the equation \(x^2 \equiv 1 \mod d\).
\end{abstract}

\section{Introduction}

It is the main purpose of this note to prove the following theorem.
\begin{thm} \label{main}
Let $d(n)$ denote the number of divisors of $n$. Then
$$\sum_{n \leq N} d(n^2-1) \sim \frac{6}{\pi^2} N \log^2 N$$
as $N \rightarrow \infty$. 
\end{thm}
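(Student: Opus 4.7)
The plan is to apply the Dirichlet hyperbola identity and then interchange the order of summation so that the difficulty is transferred to the arithmetic function $g$. For $n \geq 2$, the integer $n^2 - 1$ is never a perfect square (consecutive squares differ by more than $1$), so the divisors of $n^2-1$ pair up as $\{d, (n^2-1)/d\}$ with exactly one element of each pair strictly less than $\sqrt{n^2-1} < n$. Hence
\[
d(n^2-1) = 2\,\#\{d \mid n^2-1 : d < n\},
\]
and switching the order of summation yields
\[
\sum_{2 \leq n \leq N} d(n^2-1) = 2 \sum_{d < N} \#\{d < n \leq N : n^2 \equiv 1 \pmod{d}\}.
\]

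The solutions to $n^2 \equiv 1 \pmod d$ in $\mathbb{Z}_d$ form $g(d)$ residue classes, so the inner count equals $g(d) N/d + O(g(d))$ by the standard lattice-point estimate, and trimming off the range $n \leq d$ contributes another $O(g(d))$. Summing over $d$ gives
\[
\sum_{n \leq N} d(n^2-1) = 2N \sum_{d < N} \frac{g(d)}{d} + O\!\left( \sum_{d < N} g(d) \right).
\]

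To finish, I would invoke the mean value $\sum_{d \leq N} g(d) \sim \frac{6}{\pi^2} N \log N$ promised in the abstract. Partial summation upgrades this to
\[
\sum_{d < N} \frac{g(d)}{d} \sim \frac{3}{\pi^2} \log^2 N,
\]
and the error term is absorbed since $\sum_{d < N} g(d) = O(N \log N) = o(N \log^2 N)$. Multiplying by $2N$ recovers the claim.

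The main obstacle is therefore the mean value result for $g$ itself. Since $g$ is multiplicative with $g(p^k) = 2$ at odd prime powers and the anomalous values $g(2)=1$, $g(4)=2$, $g(2^k)=4$ for $k \geq 3$, the Dirichlet series $\sum g(n)\,n^{-s}$ factors as $\zeta(s)^2$ times a function analytic and nonzero at $s=1$; a Wirsing- or Selberg--Delange-style tauberian argument then produces the leading constant, with the local factor at $p=2$ combining with $\zeta(2)^{-1}$ to give precisely $6/\pi^2$. Everything downstream of that computation is routine bookkeeping of error terms.
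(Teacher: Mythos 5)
Your proposal is correct and follows essentially the same route as the paper: the divisor-pairing identity $d(n^2-1)=2\#\{d\mid n^2-1:\ d<n\}$, the interchange of summation, the local count $g(d)N/d+O(g(d))$, partial summation from $\sum_{d<N}g(d)\sim \frac{6}{\pi^2}N\log N$, and a Tauberian argument applied to $\sum g(n)n^{-s}=\zeta(s)^2\zeta(2s)^{-1}\cdot(\text{local factor at }2)$, whose value at $s=1$ is indeed $1$. Nothing essential is missing.
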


In consideration of the more general sum $\sum_{n \leq N} d(n^2 +a)$, it was noted by Hooley \cite{hooley} that, in the case where $a = -k^2$, we may factorise $n^2+a$ as $(n-k)(n+k)$, and then the sum bears much resemblance to
\begin{equation} \label{inghamsum}
\sum_{n \leq N} d(n) d(n+2k),
\end{equation}
which was first studied by Ingham \cite{ingham}. As mentioned by Hooley, it is certainly possible in this case to compare these sums to show that
$$\sum_{n \leq N} d(n^2 -k^2) \sim C(k) N \log^2 N$$
as $N \rightarrow \infty$ for some constant $C(k)$. Elsholtz, Filipin and Fujita showed (see Lemma 3.5 of \cite{elsholtz}) that $C(1) \leq 2$. Trudgian \cite{trudgian} reduced this to $C(1) \leq 12/\pi^2$, before Cipu \cite{cipu} showed that $C(1) \leq 9/\pi^2$. Theorem \ref{main} of this note gives the result that $C(1) = 6/\pi^2$. 

However, rather than work from Ingham's asymptotic formula, we give a proof that requires information on the number of solutions to the equation $x^2 \equiv 1 \mod d$. Thus, before we prove Theorem \ref{main}, we first prove the following result which is of interest in its own right.

\begin{thm} \label{main2}
Let $g(d)$ denote the number of solutions to the equation $x^2 \equiv 1 \mod d$ such that $1 \leq x \leq d$. Then
$$\sum_{d < N} g(d) \sim \frac{6}{\pi^2} N \log N$$
as $N \rightarrow \infty$.
\end{thm}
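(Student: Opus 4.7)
The plan is to reduce the partial sums of $g$ to standard sums over squarefree integers via Möbius inversion. Since $g$ is multiplicative (by the Chinese Remainder Theorem), writing $g = 1 * f$ defines a multiplicative function $f$ with $f(p^k) = g(p^k) - g(p^{k-1})$. For every odd prime $p$, the congruence $x^2 \equiv 1 \pmod{p^k}$ has exactly the two roots $x \equiv \pm 1$, giving $g(p^k) = 2$ for all $k \geq 1$ and hence $f(p) = 1$, $f(p^k) = 0$ for $k \geq 2$. For $p = 2$, the sequence $g(2^k) = 1, 2, 4, 4, 4, \ldots$ produces $f(1) = 1$, $f(2) = 0$, $f(4) = 1$, $f(8) = 2$, and $f(2^k) = 0$ for $k \geq 4$. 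Hence $f$ is supported on integers of the form $m$, $4m$, or $8m$ with $m$ odd and squarefree, and $|f(e)| \leq 2$ throughout.

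Next I would interchange the order of summation to obtain
\begin{equation*}
\sum_{d \leq N} g(d) \;=\; \sum_{e \leq N} f(e) \left\lfloor \frac{N}{e} \right\rfloor \;=\; N \sum_{e \leq N} \frac{f(e)}{e} \;+\; O\!\left( \sum_{e \leq N} |f(e)| \right).
\end{equation*}
Since $f$ is bounded and has at most $O(N)$ nonzero values up to $N$, the error term is $O(N)$.

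For the main term, splitting according to the three cases $e = m$, $e = 4m$, $e = 8m$ yields
\begin{equation*}
\sum_{e \leq N} \frac{f(e)}{e} \;=\; \sum_{\substack{m \leq N \\ m \text{ odd sqfr.}}} \frac{1}{m} \;+\; \frac{1}{4}\sum_{\substack{m \leq N/4 \\ m \text{ odd sqfr.}}} \frac{1}{m} \;+\; \frac{1}{4}\sum_{\substack{m \leq N/8 \\ m \text{ odd sqfr.}}} \frac{1}{m}.
\end{equation*}
Each such sum satisfies $\sum_{m \leq X,\, m \text{ odd sqfr.}} 1/m = (4/\pi^2)\log X + O(1)$; this follows by partial summation from the counting estimate $\sum_{m \leq X,\, m \text{ odd sqfr.}} 1 = (4/\pi^2)X + O(\sqrt{X})$, which in turn comes from restricting the classical asymptotic $\sum_{m \leq X} \mu^2(m) \sim (6/\pi^2) X$ to the odd integers. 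Combining, the three pieces contribute $(4/\pi^2)(1 + \tfrac{1}{4} + \tfrac{1}{4})\log N + O(1) = (6/\pi^2)\log N + O(1)$, and multiplying by $N$ yields Theorem~\ref{main2}.

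The only real care required is in the handling of $p = 2$: since $g(2^k)$ departs from the clean pattern $g(p^k) = 2$, one must track the coefficients $1, 1, 2$ and the shifted arguments $N, N/4, N/8$ with some caution to confirm that they combine to exactly $3/2$, producing the constant $6/\pi^2$ rather than something slightly off. Everything else is routine Dirichlet convolution and standard sums of multiplicative functions.
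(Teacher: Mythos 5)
Your proof is correct, and it takes a genuinely different route from the paper. The paper forms the Dirichlet series $F(s)=\sum_n g(n)n^{-s}$, factors it via the Euler product as an explicit $2$-adic factor times $\zeta^2(s)/\zeta(2s)$, and then invokes a Tauberian theorem (Theorem 2.4.1 of Cojocaru--Murty) with $\alpha=-1$ to get $\sum_{d<N}g(d)\sim cN\log N$ with $c=\lim_{s\to1}(s-1)^2F(s)=1/\zeta(2)$. You instead work entirely elementarily: writing $g=1*f$, your computation of $f$ at prime powers is right ($f(p)=1$, $f(p^k)=0$ for $k\ge2$ at odd $p$; $f(2)=0$, $f(4)=1$, $f(8)=2$, $f(2^k)=0$ for $k\ge4$), the hyperbola-style interchange gives error $O(N)$ since $|f|\le2$, and the three pieces $m$, $4m$, $8m$ contribute $\tfrac{4}{\pi^2}\bigl(1+\tfrac14+\tfrac14\bigr)\log N=\tfrac{6}{\pi^2}\log N$ as claimed (the odd-squarefree density $4/\pi^2$ and the partial-summation step are both standard and correct). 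Your approach buys an explicit error term, $\sum_{d\le N}g(d)=\tfrac{6}{\pi^2}N\log N+O(N)$, which the Tauberian route does not provide; the paper's approach buys brevity and packages the local computation at $p=2$ into a single evaluation of the Euler factor at $s=1$, and it generalizes mechanically to the $g_r$ of the paper's final section where the convolution bookkeeping would be messier. The only cosmetic discrepancy is $\sum_{d\le N}$ versus the theorem's $\sum_{d<N}$, which is immaterial to the asymptotic.
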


After proving our two theorems, we give some insight on how one might generalise this work.

It should also be noted that the sum in Theorem \ref{main} plays a role in the theory of Diophantine $m$-tuples. We call a set of $m$ distinct integers $\{a_1, \ldots, a_m\}$ a Diophantine $m$-tuple if $a_i a_j+1$ is a perfect square for all $1 \leq i < j \leq m$. For example, the set $\{1,3,8,120\}$ is a Diophantine quadruple. It has been shown by Dujella \cite{dujella} that there are no diophantine $m$-tuples for $m \geq 6$, and it has been conjectured that there are no Diophantine quintuples, though this has yet to be proven. The best result in this direction is that of Trudgian \cite{trudgian}, who has recently shown that there are at most $2.3 \cdot 10^{29}$ Diophantine quintuples. In this context, the sum appearing in Theorem \ref{main} is useful, for it is equal to twice the number of Diophantine 2-tuples $\{a,b\}$ such that $ab + 1 \leq N^2$.

\section{Proof of main theorems}
We start by manipulating the divisor sum in the usual way. We have that
\begin{eqnarray*}
\sum_{n \leq N} d(n^2-1) & = & \sum_{n \leq N} \bigg( 2 \sum_{\substack{d | (n ^2 - 1) \\ d < n }} 1\bigg) \\
& = & 2 \sum_{d < N} \sum_{\substack{ d < n \leq N \\ n^2 \equiv 1 \text{ mod }d } }1
\end{eqnarray*}
where the inner sum is now over the integers $n$ in the interval $(d,N]$ such that $n^2$ is congruent to $1$ modulo $d$. We let $g(d)$ denote the number of solutions to the equation $x^2 \equiv 1 \mod d$ where $x \in \mathbb{Z}_d$. To estimate the inner sum, we first require the following lemma.

\begin{lem} \label{lem1}
Let $d$ be a positive integer. Writing $d = 2^a q$, where $q$ is odd and $a \geq 0$, it follows that $g(d) = 2^{\omega(q)+s(a)}$, where $\omega(q)$ denotes the number of distinct prime factors of $q$ and
 \begin{displaymath}
   s(a) = \left\{
     \begin{array}{lr}
       0 & \text{if }  a \leq 1\\
       1 & \text{if }   a = 2 \\
       2 & \text{if }  a \geq 3
     \end{array}
   \right.
\end{displaymath}
\end{lem}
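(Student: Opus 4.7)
The plan is to exploit the multiplicative structure of $g$ via the Chinese Remainder Theorem and then compute the local solution counts at each prime power, treating the prime $2$ separately.

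First, I would show that $g$ is multiplicative: if $\gcd(d_1, d_2) = 1$, then CRT gives a bijection between solutions of $x^2 \equiv 1 \pmod{d_1 d_2}$ and pairs of solutions of $x_i^2 \equiv 1 \pmod{d_i}$, so $g(d_1 d_2) = g(d_1) g(d_2)$. Writing $d = 2^a p_1^{e_1} \cdots p_r^{e_r}$ with the $p_i$ distinct odd primes, this reduces the lemma to computing $g(2^a)$ and $g(p^e)$ for odd primes $p$.

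For odd primes $p$, the factorisation $x^2 - 1 = (x-1)(x+1)$ together with the observation that $\gcd(x-1, x+1)$ divides $2$ (and is therefore coprime to $p$) shows that $p^e \mid (x-1)(x+1)$ forces $p^e \mid (x-1)$ or $p^e \mid (x+1)$. Hence there are exactly two solutions, $x \equiv \pm 1 \pmod{p^e}$, giving $g(p^e) = 2$ for every odd prime power. This already accounts for the factor $2^{\omega(q)}$.

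The main obstacle is the prime $2$, where $x-1$ and $x+1$ are no longer almost-coprime and the $\pm 1$ solutions can coalesce or split. The cases $a = 0, 1, 2$ I would dispatch by direct inspection, yielding $1, 1, 2$ solutions respectively, in agreement with $2^{s(a)}$. For $a \geq 3$, any solution $x$ must be odd, so I would substitute $x = 2y+1$ and rewrite the congruence $x^2 \equiv 1 \pmod{2^a}$ as $4y(y+1) \equiv 0 \pmod{2^a}$, i.e.\ $y(y+1) \equiv 0 \pmod{2^{a-2}}$. Since $y$ and $y+1$ are consecutive integers and hence coprime, exactly one of them must absorb the full factor $2^{a-2}$, so $y \equiv 0$ or $y \equiv -1 \pmod{2^{a-2}}$. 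As $x$ ranges over $\mathbb{Z}_{2^a}$ the parameter $y$ ranges over a complete residue system modulo $2^{a-1}$, and each of these two conditions contributes two residues, yielding $g(2^a) = 4 = 2^{s(a)}$. Combining everything via multiplicativity gives $g(d) = 2^{s(a)} \cdot 2^{\omega(q)} = 2^{\omega(q) + s(a)}$, as required.
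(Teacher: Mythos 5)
Your proof is correct. The paper itself disposes of this lemma by citing Lemma 4.1 of Cipu, so any self-contained argument is necessarily ``different'' from the printed proof; what you give is exactly the standard elementary argument, and it matches the author's own unpublished version in structure: multiplicativity of $g$ via the Chinese Remainder Theorem, the count $g(p^e)=2$ for odd $p$ from $\gcd(x-1,x+1)\mid 2$, hand verification for $a\le 2$, and a separate analysis at $a\ge 3$. The one place you genuinely deviate is the $2$-adic case: the author's version splits $2^a\mid (x-1)(x+1)$ into four divisibility cases using $\gcd(x-1,x+1)\le 2$ and lists the solutions $1$, $2^{a-1}\pm 1$, $2^a-1$ explicitly, whereas you substitute $x=2y+1$ and reduce to $y(y+1)\equiv 0 \pmod{2^{a-2}}$ with $y$ and $y+1$ coprime. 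Your substitution is arguably cleaner, and the counting at the end is right: the condition depends only on $y$ modulo $2^{a-2}$, the two solution classes $y\equiv 0$ and $y\equiv -1$ are distinct precisely because $a\ge 3$, and each lifts to two residues modulo $2^{a-1}$, giving $g(2^a)=4$. Both routes are complete; yours has the small advantage of not needing to exhibit the four roots explicitly.
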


\begin{proof}
This follows from Lemma 4.1 of Cipu \cite{cipu}.
\end{proof}

Denote by $Q(x,d)$ the number of positive integers $n \leq x$ such that $n^2 \equiv 1 \mod d$. Lemma \ref{lem1} allows us to estimate $Q(x,d)$, for in an interval of length $d$ there will be $g(d)$ such numbers that satisfy the congruence. Therefore, we have that
\begin{equation} \label{count}
Q(x,d) = g(d) \frac{x}{d} + O(g(d)).
\end{equation}
With this notation, we can write our original sum as
\begin{eqnarray*}
\sum_{n \leq N} d(n^2-1) & = &  2 \sum_{d < N} \bigg( Q(N,d) - Q(d,d) \bigg).
\end{eqnarray*}
It follows now from (\ref{count}) and the fact that $Q(d,d) = g(d)$ that
\begin{equation} \label{later}
\sum_{n \leq N} d(n^2-1)  =   2 N \sum_{d < N} \frac{g(d)}{d} + O \bigg( \sum_{d < N} g(d)\bigg).
\end{equation}
The order of the error term can be bounded in the straightforward way
$$\sum_{d < N} g(d) \ll \sum_{d < N} 2^{\omega(d)} \ll N \log N,$$
and so it remains to show that
$$ \sum_{d < N} \frac{g(d)}{d} \sim  \frac{3}{\pi^2} \log^2 N$$
as $N \rightarrow \infty$. To estimate this sum, we will use the following result, which can be found as Theorem 2.4.1 in Cojocaru and Murty \cite{cojocarumurty}.

\begin{lem} \label{murty}
Let
$$F(s) = \sum_{n=1}^{\infty} \frac{a_n}{n^s}$$
be a Dirichlet series with non-negative coefficients converging for $\text{Re}(s) >1$. Suppose that $F(s)$ extends analytically at all points on $\text{Re}(s)=1$ apart from $s=1$, and that at $s=1$ we can write
$$F(s) = \frac{H(s)}{(s-1)^{1-\alpha}}$$
for some $\alpha \in \mathbb{R}$ and some $H(s)$ holomorphic in the region $\text{Re}(s) \geq 1$ and non-zero there. Then
$$\sum_{n \leq x} a_n \sim \frac{c x}{( \log x)^{\alpha}}$$
with
$$c:=\frac{H(1)}{\Gamma(1 - \alpha)}$$
where $\Gamma$ is the Gamma function.
\end{lem}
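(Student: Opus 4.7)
The plan is to prove this Tauberian statement through a contour integral approach using Perron's formula, combined with a shift of the line of integration past the branch point at $s = 1$. The leading contribution will be extracted from a Hankel-type contour encircling $s = 1$, which will naturally produce the $1/\Gamma(1-\alpha)$ factor in the constant.

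First, I would start from the truncated Perron formula
\[
\sum_{n \leq x} a_n \;=\; \frac{1}{2\pi i} \int_{c - iT}^{c + iT} F(s)\, \frac{x^s}{s} \, ds \;+\; E(x, T),
\]
for some $c > 1$, where the error term $E(x,T)$ can be controlled using the non-negativity of the coefficients $a_n$ together with the behavior of $F$ just to the right of the line $\text{Re}(s) = 1$. I would then deform the contour into a path consisting of two horizontal segments at height $\pm T$, two vertical segments slightly to the left of $\text{Re}(s) = 1$, and a small Hankel-shaped loop encircling the singularity at $s = 1$. The hypothesis that $F$ extends analytically to every point on $\text{Re}(s) = 1$ away from $s = 1$ ensures no further singularities are crossed in this deformation.

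The main contribution then comes from the Hankel loop. Writing $F(s) = H(s)(s-1)^{\alpha - 1}$ near $s=1$ and invoking the classical Hankel representation
\[
\frac{1}{\Gamma(1 - \alpha)} \;=\; \frac{1}{2\pi i} \int_{\mathcal{H}} e^{w}\, w^{\alpha - 1} \, dw,
\]
the substitution $s = 1 + w/\log x$ converts the loop integral into
\[
\frac{H(1)\, x}{\Gamma(1-\alpha)\, (\log x)^{\alpha}} \;+\; \text{lower-order terms},
\]
where the corrections come from the Taylor expansion of $H(s)$ at $s=1$ and from expanding $1/s = 1 - (s-1) + \cdots$. The horizontal and vertical segments of the contour give contributions of strictly smaller order.

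The main obstacle is that justifying the contour shift and bounding the segments away from the branch point both require polynomial growth estimates for $F(s)$ along vertical lines in a strip near $\text{Re}(s) = 1$. Such bounds are not stated explicitly in the hypotheses, so one either supplies them implicitly (they typically hold in applications via a Phragmén-Lindelöf convexity argument) or replaces the Perron-based argument with a Wiener-Ikehara-style Tauberian theorem that leverages the non-negativity of $a_n$ more directly. After choosing $T$ as an appropriate power of $x$ so that the Perron error balances the contour-deformation error, one obtains the claimed asymptotic.
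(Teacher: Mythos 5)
The paper offers no proof of this lemma at all: it is quoted as Theorem 2.4.1 of Cojocaru and Murty, so there is no internal argument to compare yours against. On its own terms, your outline is the standard Selberg--Delange contour method (Perron's formula, deformation past $s=1$, Hankel loop producing the $1/\Gamma(1-\alpha)$ factor), and for the one application made in this paper --- where $\alpha=-1$, the singularity is an honest double pole of $\zeta^2(s)/\zeta(2s)$ times a harmless Euler factor, and classical growth bounds for $\zeta$ on vertical lines are available --- that machinery can be pushed through (indeed for integer $\alpha\le 0$ a residue computation replaces the Hankel loop entirely).

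As a proof of the lemma \emph{as stated}, however, there is a genuine gap, and it is precisely the one you flag in your final paragraph without resolving. The hypotheses give only analytic continuation to the points of the line $\text{Re}(s)=1$ other than $s=1$; this yields a neighbourhood of each such point whose width may shrink to zero as $|\text{Im}(s)|\to\infty$, and no bound whatsoever on the size of $F$ there. Consequently the truncated Perron error term cannot be controlled, the horizontal and vertical segments of your deformed contour cannot be bounded, and the deformation itself need not be legitimate. This is not a technicality to be supplied ``implicitly'': under these weak hypotheses the non-negativity of the $a_n$ is the essential Tauberian input, and the known proofs (Delange's Tauberian theorem, or Wiener--Ikehara and its higher-order variants) work by Fourier-analytic approximation arguments on the line $\text{Re}(s)=1$ rather than by contour shifting. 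Your closing sentence correctly names both escape routes --- assume polynomial growth, or switch to a Wiener--Ikehara-style argument --- but develops neither, so what you have is a proof sketch of a theorem with strictly stronger hypotheses, not of the lemma in question.
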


This result allows one to step from some ``well-behaved'' Dirichlet series to an asymptotic formula for the partial sum of its coefficients. We will use this to prove Theorem \ref{main2}, by exploiting the multiplicity of the function $g(d)$ to construct an appropriate Dirichlet series.

\begin{proof}
We will consider the Dirichlet series
$$F(s) = \sum_{n=1}^{\infty} \frac{g(n)}{n^s}.$$
Note that as $g(n)$ is multiplicative, we have that
$$F(s) = \prod_p \bigg( 1 + \frac{g(p)}{p^s} + \frac{g(p^2)}{p^{2s}} + \cdots \bigg).$$
More specifically, from Lemma \ref{lem1} it follows that
$$F(s) = \bigg( 1 + \frac{1}{2^s} + \frac{2}{4^s} + 4 \bigg( \frac{1}{8^s} + \frac{1}{16^s} + \cdots \bigg) \bigg) \cdot \prod_{p \text{ odd}}  \bigg( 1 + \frac{2}{p^s} + \frac{2}{p^{2s}} + \cdots \bigg). $$
We now use the fact that
\begin{eqnarray*} 
\frac{\zeta^2 (s)}{\zeta(2s)} & = & \prod_p \frac{1 - p^{-2s}}{(1-p^{-s})^2} = \prod_p \frac{1+p^{-s}}{1-p^{-s}} \\
& = & \prod_{p} \bigg( 1 + \frac{2}{p^s} + \frac{2}{p^{2s}} + \cdots \bigg)
\end{eqnarray*}
where $\zeta(s)$ is the Riemann zeta-function (see Titchmarsh \cite{titchmarsh} for more details). Therefore, we have that
$$F(s) = \bigg( 1 + \frac{1}{2^s} + \frac{2}{4^s} +  \frac{4}{8^s - 4^s} \bigg) \bigg(\frac{1-2^{-s}}{1+2^{-s}} \bigg) \frac{\zeta^2(s)}{\zeta(2s)}.$$
By the properties of the Riemann zeta-function, $F(s)$ satisfies the conditions of Lemma \ref{murty} with $\alpha = -1$ and so we have that
$$\sum_{d < N} g(d) \sim c N \log N$$
where
$$c := \lim_{s \rightarrow 1} (s-1)^2 F(s) = \frac{1}{\zeta(2)} = \frac{6}{\pi^2}.$$ 
This completes the proof.
\end{proof}

Now, it follows by partial summation that
\begin{eqnarray*} 
\sum_{d < N} \frac{g(d)}{d} & = & \frac{6}{\pi^2} \int_1^N \frac{\log t}{t} \ dt + o\bigg(\int_1^N \frac{\log t}{t} \ dt \bigg) \\
& = & \frac{3}{\pi^2} \log^2 N + o(\log^2 N).
\end{eqnarray*}
Finally, an application of the above estimate into (\ref{later}) finishes the proof of Theorem \ref{main}.

\section{Further notes}

It would be interesting to see if one could extend this work so as to determine asymptotic estimates for the sums
$$\sum_{n \leq N} d(n^2 - r^2)$$
and
$$\sum_{d < N} g_r(d),$$
where $g_r(d)$ denotes the number of solutions of the equation $x^2 \equiv r^2 \mod d$ such that $1 \leq x \leq d$. If $r$ is fixed, then note that if $p$ is an odd prime and $k \geq 1$, the equation $x^2 \equiv r^2 \mod p^k$ yields
$$p^k | (x-r)(x+r).$$
For a sufficiently large prime $p$, there will be exactly two solutions to the above, namely $x = r$ and $x = p^k - r$. Therefore, we have $g_r(p^k) = 2$ for all sufficiently large primes $p$, and thus one will inevitably require the factor $\zeta^2(s)/\zeta(2s)$ in the construction of an appropriate Dirichlet series. Thus, one can expect to obtain asymptotics of the form
$$\sum_{n \leq N} d(n^2 - r^2) \sim \frac{A(r)}{\pi^2} N \log^2 N$$
and
$$\sum_{d < N} g_r(d) \sim \frac{B(r)}{\pi^2} N \log N,$$
where $A(r)$ and $B(r)$ are rational numbers dependent on $r$.

\section*{Acknowledgements}

The author is gracious of the financial support provided by an Australian Postgraduate Award and an ANU Supplementary Scholarship. He would also like to thank Dr Timothy Trudgian for introducing him to this problem, and for many conversations of an enjoyable nature.

\clearpage

\bibliographystyle{plain}

\bibliography{biblio}

\begin{thebibliography}{1}

\bibitem{cipu}
M.~Cipu.
\newblock Further remarks on {D}iophantine quintuples.
\newblock {\em Acta Arith.}, 168:201--219, 2015.

\bibitem{cojocarumurty}
A.~C. Cojocaru and M.~R. Murty.
\newblock {\em An introduction to sieve methods and their applications}.
\newblock Cambridge University Press, 2006.

\bibitem{dujella}
A.~Dujella.
\newblock There are only finitely many {D}iophantine quintuples.
\newblock {\em J. Reine. Angew. Math.}, 566:183--214, 2004.

\bibitem{elsholtz}
C.~Elsholtz, A.~Filipin, and Y.~Fujita.
\newblock On {D}iophantine quintuples and ${D}(-1)$-quadruples.
\newblock {\em Monatsh. Math.}, 175(2):227--239, 2014.

\bibitem{hooley}
C.~Hooley.
\newblock On the number of divisors of quadratic polynomials.
\newblock {\em Acta Math.}, 110(1):97--114, 1963.

\bibitem{ingham}
A.~E. Ingham.
\newblock Some asymptotic formulae in the theory of numbers.
\newblock {\em J. Lond. Math. Soc.}, 1(3):202--208, 1927.

\bibitem{titchmarsh}
E.C. Titchmarsh.
\newblock {\em The theory of the {R}iemann zeta-function}.
\newblock Oxford University Press, 1986.

\bibitem{trudgian}
T.~S. Trudgian.
\newblock Bounds on the number of {D}iophantine quintuples.
\newblock {\em J. Number Theory}, 157:233--249, 2015.

\end{thebibliography}

\end{document}